\theoremstyle{definition}
\newtheorem{thm}{Theorem}[section]
\newtheorem{lem}[thm]{Lemma}
\newtheorem{cor}[thm]{Corollary}
\newcommand{\bt}{\beta}
\newcommand{\gm}{\gamma}
\newcommand{\eps}{\varepsilon}
\newcommand{\zt}{\zeta}
\newcommand{\Z}{{\mathbb{Z}}}
\newcommand{\C}{{\mathbb{C}}}
\newcommand{\N}{{\mathbb{N}}}
\newcommand{\Aut}{{\mathrm{Aut}}}
\newcommand{\norm}[1]{\left\Vert#1\right\Vert}
\newcommand{\set}[1]{\left\{#1\right\}}
\newcommand{\ten}{\otimes}
\newcommand{\JS}{\mathcal{Z}}
\newcommand{\bigdsum}{\bigoplus}
\title[A Criterion for $\JS$-Stability]{A Criterion for $\JS$-Stability with 
Applications to Crossed Products}
\author{Julian Buck}
\begin{document}

\begin{abstract}
%We give a general criterion (based on work of Toms and Winter) to determine 
%when a separable, unital C*-algebra $A$ is stable under tensoring with 
%the Jiang-Su algebra $\JS$, and use this to establish the $\JS$-stability for 
%crossed products of certain non-simple, noncommutative C*-algebras 
%arising from minimal dynamical systems.
Building on an argument by Toms and Winter, we show that if $A$ is a 
simple, separable, unital, $\JS$-stable C*-algebra, then the crossed 
product of $C(X,A)$ by an automorphism is also $\JS$-stable, provided 
that the automorphism induces a minimal homeomorphism on $X$. As a 
consequence, we observe that if $A$ is nuclear and purely infinite then 
the crossed product is a Kirchberg algebra.
\end{abstract}

\maketitle 

\section{Introduction}

\indent

In \cite{JiangSu}, Jiang and Su constructed a C*-algebra $\JS$ 
(now known as the {\emph{Jiang-Su algebra}}) which is simple, 
separable, unital, infinite-dimensional, strongly self-absorbing (in the 
sense of \cite{TomsWinter2}), nuclear, and has the same Elliott invariant 
as the complex numbers $\C$. A separable C*-algebra $A$ is said 
to be {\emph{$\JS$-stable}} if there is an isomorphism $A \ten \JS \cong 
A$. The property of $\JS$-stability appears to be intimately connected to 
the question of whether or not a simple, separable, nuclear 
C*-algebra is classified by its Elliott invariant. 
(See \cite{ElliottToms,Toms3} for example.)\\

In this article, we prove the following:

\begin{thm}\label{BZStable}
Let $X$ be an infinite compact metric space and let $A$ be a simple, 
separable, unital, $\JS$-stable C*-algebra. Let $\bt \in \Aut(C(X,A))$, 
and suppose that the homeomorphism $\phi \colon X \to X$ induced by 
$\bt$ is minimal. Then the crossed product C*-algebra, 
$C^*(\Z, C(X,A), \beta)$ is also $\JS$-stable.
\end{thm}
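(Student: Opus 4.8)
The plan is to deduce $\JS$-stability of the crossed product $B := C^*(\Z, C(X,A), \beta)$ from the $\JS$-stability of a canonical large subalgebra, in the spirit of the Toms--Winter treatment of minimal dynamics. First I would record the elementary but essential observation that the coefficient algebra is already $\JS$-stable: since $C(X,A) \cong C(X) \ten A$ and $A \ten \JS \cong A$, we get $C(X,A) \ten \JS \cong C(X) \ten (A \ten \JS) \cong C(X,A)$. I would also note that minimality of $\phi$ together with simplicity of $A$ makes $B$ simple, separable and unital, so that the hypotheses of a structural criterion for $\JS$-stability (such as the one developed in the preceding section) are available.

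Next I would pass to the orbit-breaking subalgebra. Fix a point $y \in X$; since $X$ is infinite and $\phi$ is minimal, $y$ is not periodic. Let $u \in B$ be the canonical implementing unitary and set $B_y := C^*\bigl(C(X,A),\, u\,C_0(X \setminus \{y\}, A)\bigr)$. Following Putnam and Phillips, $B_y$ is a \emph{centrally large} subalgebra of $B$, and it carries a concrete inductive-limit structure $B_y \cong \dirlim B_n$ in which each $B_n$ is an iterated pullback assembled from blocks of the form $C(Z_i, M_{k_i} \ten A)$ --- the natural $A$-valued analogue of the recursive subhomogeneous building blocks appearing in the scalar case. The point of replacing $C(X)$-coefficients by $A$-coefficients is that $\JS$-stability of these blocks no longer requires any control on the covering dimension of $X$: each block $C(Z_i, M_{k_i} \ten A) \cong C(Z_i) \ten M_{k_i} \ten A$ is $\JS$-stable because $A$ is. Verifying that $\JS$-stability survives the pullback gluing along the restriction maps, and then that it passes to the inductive limit $B_y$ (via the Toms--Winter permanence of $\JS$-stability under inductive limits with injective connecting maps), would give that $B_y$ is $\JS$-stable.

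Finally I would transfer $\JS$-stability from $B_y$ to all of $B$. This is exactly the step for which the criterion of the previous section is designed: because $B_y$ is centrally large in the simple algebra $B$, approximately central copies of matrix algebras (or of prime dimension-drop algebras) manufactured inside $B_y$ can be pushed into $B$ with the complement $1 - \psi(1)$ controlled in the Cuntz semigroup, and this is enough to conclude $B \cong B \ten \JS$. I expect the main obstacle to be precisely this last transfer: one must guarantee that the order-zero / central-sequence data coming from the $\JS$-stable subalgebra $B_y$ can be made to sit approximately centrally with respect to the implementing unitary $u$ as well, so that it genuinely witnesses $\JS$-stability of the whole crossed product rather than only of the coefficient algebra. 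Controlling the interaction with $u$ --- equivalently, the dynamics --- using only minimality of $\phi$ and the largeness of $B_y$, rather than any dimensional hypothesis on $X$, is the technical heart of the argument.
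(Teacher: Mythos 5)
Your overall strategy --- prove that an orbit-breaking subalgebra is $\JS$-stable and then transfer $\JS$-stability to the whole crossed product --- diverges from the paper exactly at the transfer step, and that is where your proposal has a genuine gap. You form a single orbit-breaking subalgebra $B_y$ at one point and appeal to ``central largeness'' to push $\JS$-stability from $B_y$ into $B$, but you never supply this transfer, and you yourself flag it as the technical heart. The machinery you gesture at (unital maps from dimension-drop algebras landing in $B_y$, pushed into $B$ with Cuntz-semigroup control of $1-\psi(1)$) is precisely the definition of \emph{tracial} $\JS$-absorption; largeness-type theorems of this kind (Archey--Phillips) conclude only that $B$ is tracially $\JS$-absorbing, and upgrading tracial $\JS$-absorption to honest $\JS$-stability is known only for \emph{nuclear} algebras (Hirshberg--Orovitz). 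Theorem \ref{BZStable} does not assume $A$ (hence $B$) is nuclear --- only simple, separable, unital, $\JS$-stable --- so this route cannot be closed at the stated level of generality. You also mischaracterize Lemma \ref{ZStabilityFromSubalgs}: it is not a largeness criterion, and it cannot be fed by a single subalgebra.

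The paper's proof instead verifies the hypotheses of Lemma \ref{ZStabilityFromSubalgs} directly, and its mechanism requires \emph{two} break points, not one: it takes points $x_0, x_1 \in X$ with disjoint orbits, a positive contraction $h \in C(X)\ten 1_A$ with $h(x_j)=j$ and $\norm{[u,h]}$ small (Propositions 1.1 and 3.2 of \cite{TomsWinter3}), and the \emph{three} orbit-breaking subalgebras $A_i = C^*(C(X,A)\cup uC_0(X\setminus Y_i,A))$ with $Y_0=\{x_0\}$, $Y_1=\{x_1\}$, $Y_{1/2}=\{x_0,x_1\}$, so that $A_{1/2}\subseteq A_0\cap A_1$. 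Each generator $u^\ell f$ is cut by the partition of unity $c_0(h), c_{1/2}(h), c_1(h)$, and the crucial point is that the piece supported near $x_0$ lands in the subalgebra broken at $x_1$ and vice versa, so every piece lies in $\overline{c_i(h)A_ic_i(h)}$; the approximately central unital copy of $\JS$ is then assembled fibrewise over $[0,1]$ via Dadarlat--Winter trivialization, with no largeness transfer anywhere. With your single point $y$, a generator $u^\ell f$ with $f$ nonvanishing on the orbit of $y$ has a piece near $y$ with nowhere to go, which is why the one-subalgebra version fails. Your reliance on \cite{Buck} for $\JS$-stability of orbit-breaking subalgebras (inductive limits of blocks $C(Z_i, M_{k_i}\ten A)$, where $\JS$-stability of $A$ replaces any dimension hypothesis on $X$) matches the paper, which cites the same results for its $A_i$; what is missing is the two-point, partition-of-unity argument, and without it (or a nonnuclear largeness transfer theorem that does not currently exist) the proof does not go through.
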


By ``the homeomorphism induced by $\bt$,'' what is meant is the induced 
map on the primitive spectrum of $C(X,A)$ (which can obviously be identified 
with $X$).\\

Significant progress has been made in recent years on the 
classification of crossed product C*-algebras arising from 
minimal dynamical systems. Toms and Winter (\cite{TomsWinter3}) showed 
that crossed products of infinite, finite-dimensional metric spaces by minimal 
homeomorphisms have finite nuclear dimension and are $\JS$-stable, and 
consequently that, when the projections in the crossed product separate 
traces, the crossed products are classified by ordered K-theory. More recently 
Elliott and Niu (\cite{ElliottNiu}) have demonstrated that $\JS$-stability holds for such 
crossed products even when $X$ is infinite-dimensional, so long as the minimal 
dynamical system has mean dimension zero.\\

Not as much is known in the case for crossed products of C*-algebras of 
the form $C(X,A)$. Hua (\cite{Hua}) has shown that in the case where $X$ 
is the Cantor set, $A$ has tracial rank zero, and the automorphisms in the fibre 
direction are K-theoretically trivial, the resulting crossed product has tracial rank 
zero. Our result contributes to the understanding of these crossed products, 
which are further studied by the first-named author in \cite{Buck}. The main 
results there assume that $A$ is locally subhomogeneous, and hence our 
Corollary \ref{Kirchberg}, which deals with the case where $A$ is purely 
infinite, is independent of the conclusions in \cite{Buck}.\\

We would like to thank Andrew Toms and Wilhelm Winter for suggesting 
the main technical lemma of this paper as a method to prove Theorem 
\ref{BZStable}, and for other helpful comments.

\section{The proof}

The proof that certain crossed products here are $\JS$-stable is based on an 
argument of Toms and Winter that crossed products of $C(X)$ by minimal 
homeomorphisms are $\JS$-stable. This argument appeared (as Theorem 4.4) 
in a preprint version (\cite{TomsWinter}) of \cite{TomsWinter3}. In the 
published version, it was replaced by an indirect proof of this fact.\\

We have broken apart their argument into a more general criteria for 
$\JS$-stability (the following theorem), followed by an application to our 
crossed products.

\begin{lem}\label{ZStabilityFromSubalgs}
Let $A$ be a separable, unital C*-algebra. Define $c_{0}, c_{1/2}, 
c_{1} \in C([0,1])$ by 
\[
c_{0} (t) = \begin{cases} 0 & t \leq 3/4, \\ 1 & t = 1, \\ \mbox{linear} &
\mbox{else}. \end{cases} \hspace{0.5 in} c_{1} (t) = \begin{cases} 1 & 
t = 0, \\ 0 & t \geq 1/4, \\ \mbox{linear} & \mbox{else}. \end{cases}
\]

\[
c_{1/2} (t) = \begin{cases} 0 & t = 0,1, \\ 1 & 1/4 \leq t \leq 3/4, \\ 
\mbox{linear} & \mbox{else}. \end{cases}
\]
Suppose that for any finite set $\mathcal{F} \subset A$ and any $\eta 
> 0$, there exist $\JS$-stable subalgebras $A_{0}, A_{1/2}, A_{1} 
\subset A$ and a positive contraction $h \in A_{+}$ such that $A_{1/2} 
\subseteq  A_0 \cap A_1$ and for every $a \in \mathcal{F}$, there exist 
$a_{i} \in \overline{c_{i}(h) A_{i} c_{i}(h)}$ for $i=0,1/2,1$ such that
\begin{align*}
\norm{ a- (a_{0} + a_{1/2} + a_{1}) } &< \eta \text{ and} \\
\norm{ [a_{1/2},h] } &< \eta
\end{align*}
It then follows that $A$ is $\JS$-stable.
\end{lem}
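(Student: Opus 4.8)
The plan is to verify $\JS$-stability through its standard central-sequence characterization: since $\JS$ is strongly self-absorbing, a separable unital $A$ is $\JS$-stable if and only if there is a unital $*$-homomorphism $\JS \to A_\infty \cap A'$, where $A_\infty$ denotes the sequence algebra $\ell^\infty(\N,A)/\set{(a_n): \norm{a_n}\to 0}$ and $A'$ is the relative commutant of the constant sequences (\cite{TomsWinter2}). Writing $\JS$ as an inductive limit of prime dimension drop algebras $Z_{p,q}$ with unital connecting maps, it suffices to produce, for each such $Z_{p,q}$ in a cofinal family, a unital $*$-homomorphism $Z_{p,q} \to A_\infty \cap A'$; and since $A$ is separable this reduces, by a standard reindexing argument, to the finitary statement: for every finite $\mathcal{F}\subset A$, every $\eta>0$, and every $Z_{p,q}$, there is a unital, approximately multiplicative $\psi\colon Z_{p,q}\to A$ with $\norm{[\psi(z),a]}<\eta$ for all $a\in\mathcal{F}$ and all normalized $z$ in a fixed generating set of $Z_{p,q}$. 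This is what I would construct from the hypothesis.

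First I would feed $\mathcal{F}$ and a suitable $\eta'\ll\eta$ into the hypothesis to obtain $\JS$-stable subalgebras $A_0,A_{1/2},A_1$, a positive contraction $h$, and decompositions $a\approx a_0+a_{1/2}+a_1$ with $a_i\in\overline{c_i(h)A_ic_i(h)}$ and $\norm{[a_{1/2},h]}<\eta'$. Because each $A_i$ is itself $\JS$-stable, I may apply the same central-sequence characterization inside each $A_i$ to obtain unital $*$-homomorphisms $\psi_i\colon Z_{p,q}\to A_i$ that approximately commute with $\set{a_i}\cup\set{c_i(h),h}$, using $A_{1/2}\subseteq A_0\cap A_1$ so that $\psi_{1/2}$ lands in the common part. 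The functions $c_0,c_{1/2},c_1$ then serve as the skeleton of a path: I would assemble the three local maps into a single map on $Z_{p,q}$, using $c_1(h)$, $c_{1/2}(h)$, $c_0(h)$ to interpolate the generator of the drop interval from the ``position $0$'' structure of $A_1$ through the ``middle'' structure of $A_{1/2}$ to the ``position $1$'' structure of $A_0$. The disjointness of the supports of $c_0$ and $c_1$ (contained in $[3/4,1]$ and $[0,1/4]$) together with the bridging role of $c_{1/2}$ (equal to $1$ on $[1/4,3/4]$) is exactly what realizes the endpoint and continuity conditions built into $Z_{p,q}$.

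The verification splits into two estimates. Approximate multiplicativity and unitality of the assembled map follow from the support structure of the $c_i(h)$, the functional-calculus identities among them, and the near-commutation $\norm{[a_{1/2},h]}<\eta'$, which guarantees that the middle piece is essentially constant along the path. For centrality, given $a\in\mathcal{F}$ I would replace $a$ by $a_0+a_{1/2}+a_1$ up to $\eta'$ and estimate $[\psi(z),a_i]$ one position at a time: each commutator is small because $\psi_i$ was chosen to nearly commute with $a_i$ and with the relevant $c_i(h)$, while cross terms between distinct positions are controlled by the support disjointness of $c_0,c_1$ and by $[a_{1/2},h]\approx 0$. Summing the three contributions and the approximation error yields $\norm{[\psi(z),a]}<\eta$.

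I expect the gluing estimate to be the \emph{main} obstacle. The delicate point is that the hypothesis only controls each $a_i$ on its own spectral region of $h$, so I must show that the cross terms linking the three positions are genuinely negligible and that the interpolation respects the defining relations of $Z_{p,q}$ uniformly in the cofinally growing matrix sizes $p,q$. Getting the bookkeeping of tolerances right---choosing $\eta'$, the cut-down functions of $h$, and the internal approximations $\psi_i$ so that all errors accumulate to less than $\eta$---is the technical heart, whereas the reduction to the finitary statement and the use of $\JS$-stability of the $A_i$ are comparatively routine.
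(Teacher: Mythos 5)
Your opening reductions are sound and in fact match the paper's first move (by \cite[Theorem 7.2.2]{RordamClassBook} it suffices to produce, for finite $\mathcal{F}\subset A$, $\mathcal{E}\subset\JS$ and $\eps>0$, a unital $*$-homomorphism $\JS\to A_\infty$ whose image $\eps$-commutes with $\mathcal{F}$), and your instinct to combine $\JS$-stability of the $A_i$ with functional calculus in $h$ is also how the paper proceeds. The problem is that the step you defer as ``the technical heart'' --- assembling the three local maps into a single (approximately) multiplicative map --- \emph{is} the entire content of the lemma, and the mechanism you sketch for it would fail. A partition-of-unity combination such as $\sum_i c_i(h)^{1/2}\psi_i(z)c_i(h)^{1/2}$ of three unital $*$-homomorphisms is merely c.p.c.: at a point of the spectrum of $h$ where $c_{1/2}\approx c_0\approx 1/2$, the glued map is essentially an average of two unrelated homomorphisms, which is not approximately multiplicative, and no smallness parameter is available to repair this. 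Support disjointness only separates $c_0$ from $c_1$; it says nothing about the overlaps of $c_{1/2}$ with $c_0$ and with $c_1$. Likewise $\norm{[a_{1/2},h]}<\eta$ controls commutators of $a_{1/2}$ with $h$, not the commutation you actually need, namely between the \emph{ranges} of $\psi_{1/2}$ and of $\psi_0,\psi_1$ on the overlap regions. Finally, $\JS$-stability of the $A_i$ hands you three abstract unital embeddings with no fibre-wise compatibility whatsoever, so there is nothing to ``interpolate'' against the endpoint relations $f(0)\in M_p\otimes 1$, $f(1)\in 1\otimes M_q$ of $Z_{p,q}$.

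The paper's proof solves exactly this gluing problem, and does so without ever touching dimension drop algebras. It forms the auxiliary algebra
$C=C^{*}\bigl(d_{0}\ten\JS\ten 1\ten 1\,\cup\,d_{1/2}\ten 1\ten\JS\ten 1\,\cup\,d_{1}\ten 1\ten 1\ten\JS\bigr)\subset C([0,1])\ten\JS\ten\JS\ten\JS$,
in which the three positions occupy \emph{different tensor factors}; $C$ is a $C(Y)$-algebra all of whose fibres are isomorphic to $\JS$ (using $\JS\ten\JS\cong\JS$), so by \cite{DadarlatWinter} it is $\JS$-stable, which yields the unital map $\overline{\zeta}\colon\JS\to C$ abstractly --- this is how the interpolation ``respecting the relations'' that you worry about is obtained for free, with no uniformity-in-$(p,q)$ issue ever arising. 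The gluing map $\gamma\colon\tilde C\to A_\infty$, sending $fd_i\ten(\cdot)$ to $(fd_i)(h)\overline{\rho}_i(\cdot)$, is then multiplicative by construction: the only relations to check are commutation relations (functions of $h$ against the $\overline{\rho}_i$-images; ranges of $\overline{\rho}_{1/2}$ against ranges of $\overline{\rho}_0,\overline{\rho}_1$, arranged by choosing $\overline{\rho}_{1/2}$ first and then the others in a relative central sequence algebra --- the ``speeding up'' argument, another ingredient absent from your sketch), while products of generators at the far positions vanish outright because $d_0d_1=0$. As written, your centrality estimates rest on a glued homomorphism that has not been constructed and cannot be constructed by the means you indicate; completing your route would essentially force you to reinvent this $C(Y)$-algebra device, or else an order-zero characterization of maps out of $Z_{p,q}$ together with a genuine compatibility argument between the three local embeddings.
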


\begin{proof}
Using \cite[Theorem 7.2.2]{RordamClassBook} and a diagonal sequence 
argument (cf.\ \cite[Section 4.1]{UnitlessZ}), it suffices to find, for every 
$\eps > 0$ and every pair of finite subsets $\mathcal{F} \subset A$ and 
$\mathcal{E} \subset \JS$, a unital $*$-homomorphism 
\[ 
\zt \colon \JS \to A_{\infty} := \prod_{\N} A / \bigdsum_{\N} A,
\]
such that $\norm{ [\iota_{A} (a),\zeta(z)] } < \eps$ for all $a \in \mathcal{F}$ 
and $z \in \mathcal{E}$ (with $\iota \colon A \to A_{\infty}$ the canonical 
embedding).

Therefore, let $\eps > 0$ and finite sets $\mathcal{F} \subset A$ and 
$\mathcal{E} \subset \JS$ be given. Define $d_{0}, d_{1/2}, d_{1}  \in 
C([0,1])$ by 
\[
d_{0} (t) = \begin{cases} 0 & t \leq 1/2, \\ 1 & t \geq 3/4, \\ \mbox{linear} &
\mbox{else}. \end{cases} \hspace{0.5 in} d_{1} (t) = \begin{cases} 1 & 
t \leq 1/4, \\ 0 & t \geq 1/2, \\ \mbox{linear} & \mbox{else}. \end{cases}
\]

\[ d_{1/2} (t) = \begin{cases} 0 & t = 0 \leq 1/4, t \geq 3/4, \\ 1 & t = 1/2, \\ 
\mbox{linear} & \mbox{else}. \end{cases}
\]
Then $\set{c_{0},c_{1/2},c_{1}}$ and $\set{d_{0},d_{1/2},d_{1}}$ are both 
partitions of unity for $[0,1]$. We then define 
\[
C = C^{*}(d_{0} \ten \JS \ten 1_{\JS} \ten 1_{\JS} \cup d_{1/2} \ten 1_{\JS} 
\ten \JS \ten 1_{\JS} \cup d_{1} \ten 1_{\JS} \ten 1_{\JS} \ten \JS) \subset 
C([0,1]) \ten \JS \ten \JS \ten \JS
\]
and
\[
\tilde{C} = C^{*}(C([0,1]) \ten 1_{\JS \ten \JS \ten \JS} \cup C).
\]
Identifying $C^*(d_0,d_{1/2},d_1)$ in the obvious way with $C(Y)$ where 
$Y=[\frac14,\frac34]$, we note that $C$ is a $C(Y)$-algebra, all of whose 
fibres are isomorphic to $\JS$. Therefore, by \cite{DadarlatWinter}, $C$ is 
$\JS$-stable, so there exists a unital $*$-homomorphism $\overline{\zeta} 
\colon \JS \to C \subset \tilde{C}$. Note that
\[ 
\mathcal{S} := (d_0 \ten \JS \ten 1_{\JS} \ten 1_{\JS}) \cup (d_{1/2} \ten 
1_{\JS} \ten \JS \ten 1_{\JS}) \cup (d_1 \ten 1_{\JS} \ten 1_{\JS} \ten \JS), 
\]
generates $C$ as a C*-algebra. So, by approximating $\mathcal E$ by 
$*$-polynomials in $\mathcal S$, we see that there exists $\beta > 0$ and a 
finite subset $\mathcal{S}' \subset \mathcal{S}$ such that, if $\psi \colon C 
\to B$ is any $*$-homomorphism between C*-algebras and 
$\norm{ [\psi(s),b] } < \beta$ for all $s \in \mathcal{S}'$ then 
$\norm{ [\psi(\overline{\zeta}(z)),b] } < \eps/2$ for all $z \in \mathcal{E}$.

Set $M = \max \set{\norm{z} \colon z \in \mathcal{E}}$ and let $\eta \leq 
\eps/(2M)$ be sufficiently small so that if $\norm{ [a_{1/2},h] } < \eta$ then 
$\norm{ [a_{1/2},d_i(h)] } < \beta$ for $i=0,1$. Use the hypothesis to find 
the subalgebras $A_{i}$ and the positive contraction $h$.

Since each $A_{i}$ is $\JS$-stable, there exists unital $*$-homomorphisms 
$\overline{\rho}_{i} \colon \JS \to (A_{i})_{\infty} \cap \iota_{A_i}(A_i) \subseteq 
A_{\infty} \cap \iota_A(A_i)$. Having found $\overline{\rho}_{1/2}$ first, a 
speeding up argument (cf.\ the proof of \cite[Proposition 4.4]{Winter:pure}) 
shows that we can arrange that $\overline{\rho}_i(\JS)$ commutes with 
$\overline{\rho}_{1/2}(\JS)$, for $i=0,1$.

We may define a unital $*$-homomorphism $\gm \colon \tilde{C} \to A_{\infty}$ 
by setting
\begin{align*}
\gm (f d_{0} \ten z \ten 1_{\JS} \ten 1_{\JS}) &= (f d_{0}) (h) 
\overline{\rho}_{0} (z), \\
\gm (f d_{1/2} \ten 1_{\JS} \ten z \ten 1_{\JS}) &= (f d_{1/2}) (h) 
\overline{\rho}_{1/2} (z), \\
\gm (f d_{1} \ten 1_{\JS} \ten 1_{\JS} \ten z) &= (f d_{1}) (h) 
\overline{\rho}_{1} (z),
\end{align*}
for all $f \in C([0,1])$. (The proof that this defines a $*$-homomorphism 
mainly consists of checking that anything occurring on the right-hand 
sides of two different equations above commutes.) Finally, define $\zeta 
= \gm \circ \overline{\zeta} \colon \JS \to A_\infty$.

For $a \in \mathcal{F}_1$, let $a \approx_\eps a_{0} + a_{1/2} + a_{1}$ as in 
the hypothesis. In fact, we may assume that $a_{i} = c_{i} (h) a_{i}' c_{i} (h)$ 
exactly, for some $a_{i}' \in A_{i}$. Then, for $z \in \mathcal{E}$,
\[ 
[a,\zeta(z)] \approx_{2 \norm{z} \eps} [a_{0},z] + [a_{1/2},z] + [a_{1},z]. 
\]
Notice that since $\overline{\zeta}(z) \in \tilde{C}$, it follows that there exists 
$z_{0} \in \JS$ such that
\[ 
\overline(\zeta) (z)(t) = z_{0} \ten 1_{\JS} \ten 1_{\JS} 
\]
for all $t \in [0,1/4]$. Consequently,
\begin{align*}
\zeta (z) a_{1} &= \gm (c_{1} \ten z_{0} \ten 1_{\JS} \ten 1_{\JS}) a_{1}' c_{1} 
(h) \\
&= c_{1} (h) \overline{\rho}_{1} (z_{0}) a_{1}' c_{1} (h) \\
&= c_{1} (h) a_{1}' \overline{\rho}_{1} (z_{0}) c_{1} (h) \\
&= a_{1} \zeta (z).
\end{align*}
(the last step is essentially done by reversing earlier steps). Likewise, $\zeta 
(z) a_{0} = a_{0} \zeta (z)$. Also, we have for $z \in \JS$,
\begin{align*}
[ a_{1/2}, \gm (d_{0} \ten z \ten 1_{\JS} \ten 1_{\JS}) ] &= [a_{1/2}, d_{0} (h) 
\overline{\rho}_{0} (z)] \\
&= [a_{1/2}, d_{0} (h)] \\
&< \bt.
\end{align*}
Likewise, we find that $\norm{ [a_{1/2}, \gm (s)] } < \beta$ for all $s \in 
\mathcal{S}'$, and therefore,
\[ 
\norm{ [a_{1/2},\gamma(z)] } < \eta/2 
\]
for $z \in \mathcal{E}$. It follows that
\[ 
\norm{ [a,\zeta(z)] } < \eps/2+\eps/2 = \eps,
\]
which completes the proof.
\end{proof}

\begin{proof}[Proof of Theorem \ref{BZStable}] 
Set $B := C^{*}(\Z, C(X,A), \beta)$, and let $u \in B$ denote the 
canonical unitary. We shall show that $B$ satisfies the hypotheses of 
Lemma \ref{ZStabilityFromSubalgs}. Let $\eta > 0$ and a finite set 
$\mathcal{F} \subset B$ be given. First, we may assume that 
\[
\mathcal{F} \subset \set{u^{j} C(X) \colon 0 \leq j \leq k-1}
\]
for some $k > 1$, since the linear span of these elements and their 
adjoints is dense in $B$. Set  $M := \max \set{\norm{f} \colon u^{j} f \in 
\mathcal{F}}$. Combining Propositions 1.1 and 3.2 of \cite{TomsWinter3}, 
there exists $h \in C(X) \ten 1_{A}$ and points $x_{0}, x_{1} \in X$ with 
disjoint orbits such that $h(x_{j}) = j$ (for $j=0, 1$) and such that $h,u$ 
satisfy the relations 
\[
\norm{ [u,h] } < \frac{\eta}{3M}, \hspace{0.5 in} \| [u,c_{i} (h)^{1/k}] \| 
< \frac{2 \eta}{3M k (k-1)}
\]
for $i=0,1/2,1$ and with the $c_{i}$ given as in the statement of 
Proposition \ref{ZStabilityFromSubalgs}. It then follows that, for $a = 
u^\ell f \in \mathcal{F}$ and $i=0,1/2,1$, we have 
\begin{align*}
\| c_{i} (h) a - c_{i} (h)^{(k-\ell)/k} f (c_{i} (h)^{1/k} u)^{\ell} \| &\leq 
M \| c_{i} (h)^{\ell/k}u^{\ell} - (c_{i} (h)^{1/k} u)^{\ell} \| \\
&\leq M \| [u,c_{i} (h)^{1/k}] \| (\ell + (\ell-1)+ \cdots + 1) \\
&< \frac{M \ell (\ell+1)}{2} \frac{2 \eta}{3M k (k-1)} \\
&\leq \eta/3.
\end{align*}
Thus,
\[ 
a = c_{0} (h) a + c_{1/2} (h) a + c_{1} (h) a \approx_{\eta} a_{0} 
+ a_{1/2} + a_{1}, 
\]
where $a_{i} = c_{i} (h)^{(k-\ell)/k} f (c_{i} (h)^{1/k} u)^\ell$.

Set $Y_{i}:=\{x_{i}\}$ for $i=0,1$ and $Y_{1/2}:=\{x_0,x_1\}$.
For $i=0,1/2,1$, set
\[ A_{i} := C^*(C(X,A) \cup uC_0(X \setminus Y_i,A)) \subseteq B. \]
We see that $A_{1/2} \subseteq A_{0} \cap A_{1}$, and $a_{i} \in 
\overline{c_{i} (h) A_{i} c_{i} (h)}$. Results of \cite{Buck} show that 
each $A_{i}$ is $\JS$-stable. Moreover, for $i=0,1/2,1$, (and in 
particular, for $i=1/2$),
\begin{align*}
\norm{ [a_{1/2},h] } &\leq \| [c_{i} (h) u^{\ell} f, h] \| + 
\frac{2 \eta}{3} \\
&\leq M \norm{ [u^\ell,h] } + \frac{2 \eta}{3} \\
&< \frac{M \eta}{3 M} + \frac{2 \eta}{3} \\ 
&= \eta.
\end{align*}
This verifies the hypotheses of Lemma \ref{ZStabilityFromSubalgs}, 
and therefore, $B$ is $\JS$-stable.
\end{proof}

It is well-known (see \cite{Rord}) that exact $\JS$-stable 
C*-algebras are either stably finite or purely infinite. This 
allows us to obtain a useful corollary in the case where $A$ is 
nuclear and purely infinite.

\begin{cor}\label{Kirchberg}
Adopt the hypotheses and notation of Theorem \ref{BZStable} 
and assume in addition that $A$ is nuclear and purely infinite. 
Then $B$ is a Kirchberg algebra. Consequently, $B$ has 
nuclear dimension at most 3.
\end{cor}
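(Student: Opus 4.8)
The plan is to verify that $B$ satisfies the four defining properties of a Kirchberg algebra---simplicity, separability, nuclearity, and pure infiniteness---and then to read off the nuclear-dimension bound from the structure theory of such algebras. The separability and nuclearity are routine: since $X$ is compact metric and $A$ is separable, $C(X,A) \cong C(X) \ten A$ is separable, and as both $C(X)$ and $A$ are nuclear so is $C(X,A)$; because $\Z$ is amenable, the crossed product $B$ is then separable and nuclear, hence exact, and it is unital since $C(X,A)$ is.

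The crucial property is simplicity. Because $X$ is infinite and $\phi$ is minimal, $\phi$ has no periodic points: a periodic orbit would be a finite nonempty $\phi$-invariant closed subset of $X$, which minimality would force to equal $X$, contradicting infiniteness. Hence the $\Z$-action induced by $\bt$ on $\Prim(C(X,A)) \cong X$---which is exactly $\phi$---is free, in particular topologically free. Since $A$ is simple, the ideals of $C(X,A)$ correspond to the open subsets of $X$, so minimality of $\phi$ shows that $C(X,A)$ has no nontrivial $\bt$-invariant ideals, i.e.\ the system is $\Z$-simple. Topological freeness supplies the ideal intersection property for the crossed product, and combined with $\Z$-simplicity (and amenability of $\Z$, so that the full and reduced crossed products coincide) the Archbold--Spielberg criterion gives that $B$ is simple.

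For pure infiniteness I use the dichotomy quoted above. By Theorem \ref{BZStable}, $B$ is $\JS$-stable, and I have just shown it is simple and exact, so by \cite{Rord} it is either stably finite or purely infinite. It is not stably finite: as $A$ is simple, unital and purely infinite, its unit is properly infinite, so there is an isometry $s \in A$ with $ss^* \neq 1_A$, and viewing $s$ as a constant function in $C(X,A) \subseteq B$ exhibits $1_B$ as an infinite projection. Therefore $B$ is purely infinite, and hence a Kirchberg algebra; the final statement then follows from the known estimate that every Kirchberg algebra has nuclear dimension at most $3$.

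I expect the simplicity step to be the main obstacle, the point to watch being that it is freeness of the base homeomorphism $\phi$---not merely outerness of the single automorphism $\bt$---together with simplicity of the fibre $A$ that controls the ideal structure: simplicity of $A$ collapses the invariant-ideal problem for $C(X,A)$ to the $\phi$-invariant closed subsets of $X$, while freeness of $\phi$ provides the intersection property needed to transfer $\Z$-simplicity to the crossed product.
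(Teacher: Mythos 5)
Your proposal is correct and takes essentially the same route as the paper's proof: $\JS$-stability from Theorem \ref{BZStable}, R{\o}rdam's dichotomy for simple exact $\JS$-stable algebras combined with the infiniteness of $1_B$ coming from the purely infinite subalgebra $1_{C(X)} \ten A$, simplicity via Archbold--Spielberg, and the Matui--Sato bound giving nuclear dimension at most $3$. The only difference is one of detail: you spell out what the paper leaves implicit, namely that minimality plus infiniteness of $X$ forces the action on $\Prim(C(X,A)) \cong X$ to be free (hence topologically free) and that simplicity of $A$ reduces the invariant-ideal question to $\phi$-invariant open subsets of $X$, which is exactly the content behind the paper's remark that simplicity ``essentially follows from'' Archbold--Spielberg.
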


\begin{proof}
By Theorem \ref{BZStable}, the algebra $B$ is $\JS$-stable. It is 
clearly infinite, since it contains the purely infinite algebra $A$ as 
the subalgebra $1_{C(X)} \ten A$, and it is nuclear. Therefore it is 
purely infinite. Since $B$ is simple (this essentially follows from 
\cite{ArchSpiel}), it is a Kirchberg algebra. The conclusion about 
the nuclear dimension of $B$ follows immediately from Thereom 
7.1 of \cite{MatuiSato}.
\end{proof}

If $A$ is in the UCT class, then so is the algebra $B$ of Corollary 
\ref{Kirchberg}, and hence such algebras are classified by their 
K-theory using the theorems of Kirchberg and Phillips (\cite{Kirch}, 
\cite{NCP}).

\end{document}